 \newtheorem{Theorem}{Theorem}
\newtheorem{Corollary}{Corollary}
\newenvironment{proof}[1][Proof]{\begin{trivlist}
\item[\hskip \labelsep {\bfseries #1}]}{\end{trivlist}}
\newcommand{\A}{{\cal A}}
\newcommand{\B}{{\cal B}}
\newcommand{\C}{{\cal C}}
\newcommand{\E}{{\cal E}}
\newcommand{\I}{{\cal I}}
\newcommand{\M}{{\cal M}}
\newcommand{\uu}{{\bf u}}
\newcommand{\x}{{\bf x}}
\newcommand{\y}{{\bf y}}
\newcommand{\qed}{\nobreak \ifvmode \relax \else
      \ifdim\lastskip<1.5em \hskip-\lastskip
      \hskip1.5em plus0em minus0.5em \fi \nobreak
      \vrule height0.75em width0.5em depth0.25em\fi}
\def \ep{\hbox{ }\hfill$\Box$}
\begin{document}
\title{An Even Order Symmetric B Tensor is Positive Definite\footnote{To appear in: Linear Algebra and Its Applications.}}

\author{Liqun Qi
\thanks{Department of Applied Mathematics, The Hong Kong Polytechnic University, Hung Hom,
Kowloon, Hong Kong. Email: maqilq@polyu.edu.hk. This author's work
was supported by the Hong Kong Research Grant Council (Grant No.
PolyU 502510, 502111, 501212 and 501913).}, \quad    Yisheng Song
\thanks{ \footnotesize{School of Mathematics and Information
Science,
 Henan Normal University, XinXiang HeNan,  P.R. China, 453007.
Email: songyisheng1@gmail.com.} This author's work was supported by
the National Natural Science Foundation of P.R. China (Grant No.
11171094, 11271112).  His work was partially done when he was
visiting The Hong Kong Polytechnic University.} }

\date{\today} \maketitle

\begin{abstract}
\noindent  
It is easily checkable if a given tensor is a B tensor, or a B$_0$
tensor or not. In this paper, we show that a symmetric B tensor can
always be decomposed to the sum of a strictly diagonally dominated
symmetric M tensor and several positive multiples of partially all
one tensors, and a symmetric B$_0$ tensor can always be decomposed
to the sum of a diagonally dominated symmetric M tensor and several
positive multiples of partially all one tensors. When the order is
even, this implies that the corresponding B tensor is positive
definite, and the corresponding B$_0$ tensor is positive
semi-definite.    This gives a checkable sufficient condition for
positive definite and semi-definite tensors.   This approach is
different from the approach in the literature for proving a
symmetric B matrix is positive definite, as that matrix approach
cannot be extended to the tensor case.

\noindent {\bf Key words:}\hspace{2mm} Positive definiteness, B
tensor, B$_0$ tensor, M tensor, partially all one tensor.
\vspace{3mm}

\noindent {\bf AMS subject classifications (2010):}\hspace{2mm}
47H15, 47H12, 34B10, 47A52, 47J10, 47H09, 15A48, 47H07.
  \vspace{3mm}

\end{abstract}


\section{Introduction}
\hspace{4mm} Denote $[n] := \{ 1, \cdots, n \}$. A real $m$th order
$n$-dimensional tensor  $\A = (a_{i_1\cdots i_m})$ is a multi-array
of real entries $a_{i_1\cdots i_m}$, where $i_j \in [n]$ for $j \in
[m]$.  All the real $m$th order $n$-dimensional tensors form a
linear space of dimension $n^m$.   Denote this linear space by
$T_{m, n}$.   For $i \in [n]$, we call $a_{ii_2\cdots i_m}$ for $i_j
\in [n]$, $j = 2, \cdots, m$, the entries of $\A$ in the $i$th row,
where $a_{i\cdots i}$ is the $i$th diagonal entry of $\A$, while the
other entries are the off-diagonal entries of $\A$ in the $i$th row.

Let $\A = (a_{i_1\cdots i_m}) \in T_{m, n}$. If the entries
$a_{i_1\cdots i_m}$ are invariant under any permutation of their
indices, then $\A$ is called a {\bf symmetric tensor}.  All the real
$m$th order $n$-dimensional symmetric tensors form a linear subspace
of $T_{m, n}$.  Denote this linear subspace by $S_{m, n}$. Let $\A =
(a_{i_1\cdots i_m}) \in T_{m, n}$ and $\x \in \Re^n$.   Then $\A
x^m$ is a homogeneous polynomial of degree $m$, defined by
$$\A x^m = \sum_{i_1,\cdots, i_m=1}^n a_{i_1\cdots i_m}x_{i_1}\cdots
x_{i_m}.$$   A tensor $\A \in T_{m, n}$ is called {\bf positive
semi-definite} if for any vector $\x \in \Re^n$, $\A \x^m \ge 0$,
and is called {\bf positive definite} if for any nonzero vector $\x
\in \Re^n$, $\A \x^m > 0$.   Clearly, if $m$ is odd, there is no
nonzero positive semi-definite tensors.   Positive definiteness and
semi-definiteness of real symmetric tensors and their corresponding
homogeneous polynomials have applications in automatical control
\cite{BM, HH, JM, WQ}, polynomial problems \cite{Re, Sh}, magnetic
resonance imaging \cite{CDHS, HHNQ, QYW, QYX} and spectral
hypergraph theory \cite{HQ, HQ14, HQS, HQX, LQY, Qi14, QSW}. In
\cite{Qi}, Qi introduced H-eigenvalues and Z-eigenvalues for real
symmetric tensors, and showed that an even order real symmetric
tensor is positive (semi-)definite if and only if all of its
H-eigenvalues, or all of its Z-eigenvalues, are positive
(nonnegative).   In matrix theory, it is well-known that a strictly
diagonally dominated symmetric matrix is positive definite and a
diagonally dominated symmetric matrix is positive semi-definite.
Here, we may also easily show that an even order strictly diagonally
dominated symmetric tensor is positive definite and an even order
diagonally dominated symmetric tensor is positive semi-definite.  We
will show this in Section 2.   Based upon this, we know that the
Laplacian tensor in spectral hypergraph theory is positive
semi-definite \cite{HQ14, HQS, HQX, Qi14, QYX}. Song and Qi
\cite{SQ} showed that an even order Hilbert tensor is positive
definite.   This also extends the matrix result that a Hilbert
matrix is positive definite.     In matrix theory, a completely
positive tensor is positive semi-definite, and a diagonally
dominated symmetric nonnegative tensor is completely positive. In
\cite{QXX}, completely positive tensors were introduced. An even
order completely positive tensor is also positive semi-definite.
Then, it was shown in \cite{QXX} that a strongly symmetric,
hierarchically dominated nonnegative tensor is completely positive.
These are some checkable sufficient conditions for positive definite
or semi-definite tensors in the literature.

In the matrix literature, there is another easily checkable
sufficient condition for positive definite matrices.   It is easy to
check a given matrix is a B matrix or not \cite{Pe, Pe1}.   A B
matrix is a P matrix \cite{Pe}. It is well-known that a symmetric
matrix is a P matrix if and only it is positive definite \cite[Pages
147, 153]{CPS}. Thus, a symmetric B matrix is positive definite.

P matrices and B matrices were extended to P tensors and B tensors
in \cite{SQ1}.  It is easy to check a given tensor is a B tensor or
not, while it is not easy to check a given tensor is a P tensor or
not.   It was proved there that a symmetric tensor is a P tensor if
and only it is positive definite.    However, it was not proved in
\cite{SQ1} if an even order B tensor is a P tensor or not, or if an
even order symmetric B tensor is positive definite or not.  As
pointed out in \cite{SQ1}, an odd order identity tensor is a B
tensor, but not a P tensor.  Thus we know that an odd order B tensor
may not be a P tensor.

The B tensor condition is not so strict compare with the strongly
diagonal dominated tensor condition if the tensor is not sparse. A
tensor in $T_{m, n}$ is strictly diagonally dominated tensor if
every diagonal entry of that tensor is greater than the sum of the
absolute values of all the off-diagonal entries in the same row. For
each row, there are $n^{m-1} - 1$ such off-diagonal entries. Thus,
this condition is quite strict when $n$ and $m$ are big and the
tensor is not sparse. A tensor in $T_{m, n}$ is a B tensor if for
every row of the tensor, the sum of all the entries in that row is
positive, and each off-diagonal entry is less than the average value
of the entries in the same row. An initial numerical experiment
indicated that for $m =4$ and $n=2$, a symmetric B tensor is
positive definite. Thus, it is possible that an even order symmetric
B tensor is positive definite.   If this is true, we will have an
easily checkable, not very strict, sufficient condition for positive
definite tensors.

However, the technique in \cite{Pe} cannot be extended to the tensor
case. It was proved in \cite{Pe} that the determinant of every
principal submatrix of a B matrix is positive.    Thus, a B matrix
is a P matrix.   It was pointed out in \cite{Qi} that the
determinant of every principal sub-tensor of a symmetric positive
definite tensor is positive, but this is only a necessary, not a
sufficient condition for symmetric positive definite tensors. Hence,
the technique in \cite{Pe} cannot be extended to the tensor case.

In \cite{SQ1}, P tensors were defined by extending an alternative
definition for P matrices.   But it is still unknown if an even
order B tensor is a P tensor or not.

In this paper, we use a new technique to prove that an even order
symmetric B tensor is positive definite.   We show that a symmetric
B tensor can always be decomposed to the sum of a strictly
diagonally dominated symmetric M tensor and several positive
multiples of partially all one tensors, and a symmetric B$_0$ tensor
can always be decomposed to the sum of a diagonally dominated
symmetric M tensor and several positive multiples of partially all
one tensors. Even order partially all one tensors are positive
semi-definite. As stated before, an even order diagonally dominated
symmetric tensor is positive semi-definite, and an even order
strictly diagonally dominated symmetric tensor is positive definite.
Therefore, when the order is even, these imply that the
corresponding symmetric B tensor is positive definite, and the
corresponding symmetric B$_0$ tensor is positive semi-definite.
Hence, this gives an easily checkable, not very strict, sufficient
condition for positive definite and semi-definite tensors.

In the next section, we study diagonally dominated symmetric
tensors.  In Section 3, we define B, B$_0$ and partially all one
tensors, and discuss their general properties. The main result is
given in Section 4. We make some final remarks and raise some
further questions in Section 5.

Throughout this paper, we assume that $m \ge 2$ and $n \ge 1$.   We
use small letters $x, u, v, \alpha, \cdots$, for scalers, small bold
letters $\x, \y, \uu, \cdots$, for vectors, capital letters $A, B,
\cdots$, for matrices, calligraphic letters $\A, \B, \cdots$, for
tensors. All the tensors discussed in this paper are real.

\section{Diagonally Dominated Symmetric Tensors}
\hspace{4mm}

We define the generalized Kronecker symbol as
$$\delta_{i_1\cdots \i_m} = \begin{cases}1, {\rm if}\ i_1 = \cdots = i_m, \\ 0, {\rm otherwise}. \end{cases}$$
The tensor $\I = (\delta_{i_1\cdots \i_m})$ is called the {\bf
identity tensor} of $T_{m, n}$.

Let $\A = (a_{i_1\cdots i_m}) \in T_{m, n}$.   If for $i \in [n]$,
$$a_{i\cdots i} \ge \sum \{ |a_{ii_2\cdots i_m}| : i_j \in [n], j = 2,
\cdots, m, \delta_{ii_2\cdots i_m} = 0 \},$$ then $\A$ is called a
{\bf diagonally dominated tensor}.    If for $i \in [n]$,
$$a_{i\cdots i} > \sum \{ |a_{ii_2\cdots i_m}| : i_j \in [n], j = 2,
\cdots, m, \delta_{ii_2\cdots i_m} = 0 \},$$ then $\A$ is called a
{\bf strictly diagonally dominated tensor}.

Let $\A = (a_{i_1\cdots i_m}) \in T_{m, n}$ and $\x \in {\boldmath
C}^n$.   Define $\A \x^{m-1}$ as a vector in ${\boldmath C}^n$ with
its $i$th component as
$$\left(\A \x^{m-1}\right)_i = \sum_{i_2, \cdots, i_m=1}^n a_{ii_2\cdots
i_m}x_{i_2}\cdots x_{i_m}$$ for $i \in [n]$. For any vector $\x \in
{\boldmath C}^n$, define $\x^{[m-1]}$ as a vector in ${\boldmath
C}^n$ with its $i$th component defined as $x_i^{m-1}$ for $i \in
[n]$. Let $\A \in T_{m, n}$.  If there is a nonzero vector $\x \in
{\boldmath C}^n$ and a number $\lambda \in {\boldmath C}$ such that
\begin{equation} \label{eig}
\A \x^{m-1} = \lambda \x^{[m-1]},
\end{equation}
then $\lambda$ is called an {\bf eigenvalue} of $\A$ and $\x$ is
called an {\bf eigenvector} of $\A$, associated with $\lambda$.   If
the eigenvector $\x$ is real, then the eigenvector $\lambda$ is also
real.  In this case, $\lambda$ and $\x$ are called an {\bf
H-eigenvalue} and an {\bf H-eigenvector} of $\A$, respectively. The
maximum modulus of the eigenvalues of $\A$ is called the {\bf
spectral radius} of $\A$, and denoted as $\rho(\A)$.   Eigenvalues
and H-eigenvalues were first introduced in \cite{Qi} for symmetric
tensors.    The following theorem is from \cite[Theorem 5]{Qi}.

\begin{Theorem} \label{t1}
Suppose that $\A \in S_{m, n}$ and $m$ is even.  Then $\A$ always
has H-eigenvalues.    $\A$ is positive semi-definite if and only if
all of its H-eigenvalues are nonnegative.   $\A$ is positive
definite if and only if all of its H-eigenvalues are positive.
\end{Theorem}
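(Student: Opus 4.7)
The plan is to exploit that when $m$ is even, the level set $S = \{\x \in \Re^n : \sum_{i=1}^n x_i^m = 1\}$ is a compact smooth hypersurface on which the continuous function $f(\x) = \A \x^m$ must attain its extrema. Working with this variational setup reduces the theorem to a Lagrange multiplier calculation together with a homogeneity argument.

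For existence of H-eigenvalues, I would minimize $f$ over $S$. Since $\A$ is symmetric, $\nabla f(\x) = m \A \x^{m-1}$, while the constraint $g(\x) = \sum_{i=1}^n x_i^m - 1 = 0$ has gradient $\nabla g(\x) = m \x^{[m-1]}$. The gradient $\nabla g$ cannot vanish on $S$: if every $x_i^{m-1}$ were zero then $\x = \0$, contradicting $\x \in S$. So the first-order Lagrange condition yields $\A \x^{m-1} = \lambda \x^{[m-1]}$ for some $\lambda \in \Re$, which is exactly equation (\ref{eig}). Extremizers on $S$ therefore furnish real H-eigenvectors, and the multipliers are H-eigenvalues; compactness of $S$ guarantees they exist.

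For the equivalence with positive (semi-)definiteness, I would contract (\ref{eig}) with $\x$ to obtain $\A \x^m = \lambda \sum_{i=1}^n x_i^m$. Because $m$ is even, $\sum_{i=1}^n x_i^m > 0$ whenever $\x \neq \0$, so $\lambda$ has the same sign as $\A \x^m$. Hence if $\A$ is positive (semi-)definite, each H-eigenvalue is positive (nonnegative). Conversely, every H-eigenvector can be rescaled into $S$, so the set of H-eigenvalues equals $\{f(\x) : \x \text{ a critical point of } f|_S\}$; in particular $\min_{\x \in S} f(\x) = \lambda_{\min}$, the smallest H-eigenvalue. For arbitrary nonzero $\x$, write $t = (\sum_{i=1}^n x_i^m)^{1/m} > 0$ and $\y = \x/t \in S$; homogeneity gives $\A \x^m = t^m \A \y^m \geq t^m \lambda_{\min}$, which is positive when $\lambda_{\min} > 0$ and nonnegative when $\lambda_{\min} \geq 0$.

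The main technical point is verifying that the Lagrange framework really delivers \emph{all} relevant eigenpairs, so that the minimum of $f$ on $S$ coincides with the smallest H-eigenvalue rather than merely bounding it. Evenness of $m$ is used crucially at three places: it renders $S$ compact rather than unbounded, it makes $\sum_{i=1}^n x_i^m$ strictly positive away from the origin (so the normalization $\x \mapsto \x/t$ is always defined), and it aligns the sign of the componentwise $(m-1)$st power map with $\x$ so that the Lagrange equation matches (\ref{eig}) rather than an ``absolute value'' variant. Odd $m$ defeats all three, consistent with the well-known failure of positive semi-definiteness in odd order.
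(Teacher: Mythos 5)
Your proposal is correct; note that the paper itself gives no proof of this statement, quoting it as Theorem 5 of \cite{Qi}, and your variational argument (minimize $\A\x^m$ over the compact level set $\sum_i x_i^m = 1$, obtain the H-eigenpair from the Lagrange condition using symmetry of $\A$, then pass between H-eigenvalues and values of the restricted function by contraction with $\x$ and $m$-th power homogeneity) is essentially the proof given in that reference. The points you flag — nonvanishing of $\nabla g$ on the level set, rescalability of every real eigenvector into the level set, and the identification of $\min$ with the smallest H-eigenvalue — are all handled correctly by the evenness of $m$, so there is no gap.
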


The following theorem is from \cite[Theorem 6]{Qi}.   Theorem 6 of
\cite{Qi} is restricted to symmetric tensors.   But it is true for
nonsymmetric tensors, and the proof is the same.

\begin{Theorem} \label{t2}
Suppose that $\A \in T_{m, n}$.  Then the eigenvalues $\lambda$ of
$\A$ satisfy the following constraints: for $i \in [n]$,
$$|\lambda - a_{i\cdots i}| \le \sum \{ |a_{ii_2\cdots i_m}| : i_j \in [n], j = 2,
\cdots, m, \delta_{ii_2\cdots i_m} = 0 \}.$$
\end{Theorem}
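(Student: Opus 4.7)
The plan is to adapt the classical Gershgorin disc argument from matrix theory to the tensor setting. The key tool is to localize the eigenvalue equation (\ref{eig}) at the index $i$ where the eigenvector attains its maximum absolute value; this turns the multilinear products $x_{i_2}\cdots x_{i_m}$ into a bounded quantity and lets the off-diagonal entries be pulled out of the sum in absolute value.

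First, I would fix an eigenpair $(\lambda, \x)$ with $\x \in \mathbb{C}^n \setminus \{\0\}$ satisfying $\A \x^{m-1} = \lambda \x^{[m-1]}$, and choose $i \in [n]$ such that $|x_i| = \max_{j \in [n]} |x_j|$. Since $\x \ne \0$, we have $|x_i| > 0$. Writing out the $i$th coordinate of the eigenvalue equation and separating the purely diagonal term $a_{i\cdots i} x_i^{m-1}$ yields
\begin{equation*}
(\lambda - a_{i\cdots i}) \, x_i^{m-1} \; = \sum_{\substack{i_2,\ldots,i_m \in [n]\\ \delta_{ii_2\cdots i_m}=0}} a_{ii_2\cdots i_m}\, x_{i_2}\cdots x_{i_m}.
\end{equation*}

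Next, I would take absolute values on both sides and use $|x_{i_j}| \le |x_i|$ for each $j = 2, \ldots, m$, so that $|x_{i_2}\cdots x_{i_m}| \le |x_i|^{m-1}$ term-by-term. This gives
\begin{equation*}
|\lambda - a_{i\cdots i}|\, |x_i|^{m-1} \;\le\; |x_i|^{m-1} \sum_{\substack{i_2,\ldots,i_m \in [n]\\ \delta_{ii_2\cdots i_m}=0}} |a_{ii_2\cdots i_m}|.
\end{equation*}
Dividing through by the strictly positive scalar $|x_i|^{m-1}$ yields exactly the stated bound at this particular $i$, which is the standard Gershgorin-type localization.

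The argument does not really have a hard step, which fits the author's remark that the proof is the same as in the symmetric case; the only subtlety to flag is that because the eigenvector may be complex, one must be careful to work with moduli throughout and exploit $|x_i|>0$ to divide safely, and note that the statement only asserts the bound holds for the particular index $i$ where $|x_j|$ is maximized, so the quantifier ``for $i \in [n]$'' should be read as ``for some $i\in[n]$'' (i.e.\ the eigenvalue lies in the union of the tensor Gershgorin discs). No symmetry of $\A$ is used, which is why the extension from \cite[Theorem 6]{Qi} to nonsymmetric tensors is immediate.
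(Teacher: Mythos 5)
Your proof is correct and is essentially the same Gershgorin-type argument as the proof of Theorem 6 in \cite{Qi}, which this paper simply cites (noting that symmetry is never used), so it matches the intended proof exactly. Your reading of the quantifier as ``the eigenvalue lies in the union of the $n$ discs'' is also the correct interpretation of the statement.
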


We now have the following theorem.

\begin{Theorem}\label{t3}
Let $\A \in S_{m, n}$ and $m$ be even.   If $\A$ is diagonally
dominated, then $\A$ is positive semi-definite.  If $\A$ is strictly
diagonally dominated, then $\A$ is positive definite.
 \end{Theorem}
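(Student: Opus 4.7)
The plan is to reduce the claim to a statement about H-eigenvalues and then apply the Gershgorin-type bound of Theorem \ref{t2}. Since $\A$ is symmetric and $m$ is even, Theorem \ref{t1} says that $\A$ has H-eigenvalues, and that $\A$ is positive semi-definite (respectively, positive definite) if and only if every H-eigenvalue is nonnegative (respectively, positive). So the entire question becomes: what is the smallest H-eigenvalue of $\A$?

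To bound this, let $\lambda$ be an arbitrary H-eigenvalue of $\A$. Theorem \ref{t2} (read in the standard Gershgorin way: each eigenvalue lies in the union over $i$ of the intervals/disks centered at $a_{i\cdots i}$ of radius equal to the off-diagonal absolute row sum) yields an index $i \in [n]$ with
$$|\lambda - a_{i\cdots i}| \le R_i := \sum \{ |a_{ii_2\cdots i_m}| : i_j \in [n],\ j = 2, \cdots, m,\ \delta_{ii_2\cdots i_m} = 0 \},$$
and in particular $\lambda \ge a_{i\cdots i} - R_i$. The diagonal dominance hypothesis says exactly that $a_{i\cdots i} - R_i \ge 0$ for every $i$ (with strict inequality in the strict case), so $\lambda \ge 0$ (respectively $\lambda > 0$) no matter which $i$ the Gershgorin bound selects. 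Since $\lambda$ was an arbitrary H-eigenvalue, every H-eigenvalue of $\A$ is nonnegative (respectively positive), and Theorem \ref{t1} then gives positive semi-definiteness (respectively positive definiteness).

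There is essentially no serious obstacle: the argument is a direct tensor analogue of the classical matrix proof that a symmetric diagonally dominated matrix with nonnegative diagonal is positive semi-definite, once one has Theorem \ref{t1} (to reduce to H-eigenvalues) and Theorem \ref{t2} (the Gershgorin-style localization). The only point that deserves mention is the use of evenness of $m$ and symmetry of $\A$, which enter only through Theorem \ref{t1}; without these, H-eigenvalues might fail to exist or fail to characterize (semi-)definiteness, so the chain of implications would break down.
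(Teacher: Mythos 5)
Your argument is correct and is essentially the paper's own proof: Theorem \ref{t2} (the Gershgorin-type localization) shows every H-eigenvalue of a (strictly) diagonally dominated tensor is nonnegative (positive), and Theorem \ref{t1} then converts this into positive semi-definiteness (definiteness), with symmetry and even order entering only through Theorem \ref{t1}. The paper states this in two sentences; your write-up just makes the same steps explicit.
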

\begin{proof}  By Theorem \ref{t2} and the definition of diagonally dominated and strictly diagonally dominated tensors, all the
H-eigenvalues of a diagonally dominated tensor, if exist, are
nonnegative, and all the H-eigenvalues of a strictly diagonally
dominated tensor, if exist, are positive.  The conclusions follow
from Theorem \ref{t1} now.  \ep
\end{proof}

\bigskip

Let $\A \in T_{m, n}$.   If all of the off-diagonal entries of $\A$
are non-positive, then $\A$ is called a {\bf Z tensor}.   If a Z
tensor $\A$ can be written as $\A = c\I - \B$, such that $\B$ is a
nonnegative tensor and $c \ge \rho(\B)$, then $\A$ is called an {\bf
M tensor} \cite{ZQZ}.   If $c > \rho(\B)$, then $\A$ is called a
{\bf strong M tensor} \cite{ZQZ}.  It was proved in \cite{ZQZ} that
a diagonally dominated Z tensor is an M tensor, and a strictly
diagonally dominated Z tensor is a strong M tensor. The properties
of M and strong M tensors may be found in \cite{DQW, HeH, ZQZ}.

\section{B, B$_0$ and Partially All One Tensors}
\hspace{4mm}   Let $\B = (b_{i_1\cdots i_m}) \in T_{m, n}$.  We say
that $\B$ is a {\bf B tensor} if for all $i \in [n]$
$$\sum_{i_2,\cdots, i_m=1}^{n}b_{ii_2i_3\cdots i_m}>0$$ and $$\frac1{n^{m-1}}\left(\sum_{i_2,\cdots, i_m=1}^{n}b_{ii_2i_3\cdots i_m}\right)>b_{ij_2j_3\cdots j_m} \mbox{ for all } (j_2, j_3, \cdots, j_m)\ne (i, i, \cdots, i).$$
We say that $\B$ is a {\bf B$_0$ tensor} if for all $i \in [n]$
$$\sum_{i_2,\cdots, i_m=1}^{n}b_{ii_2i_3\cdots i_m}\ge0$$ and $$\frac1{n^{m-1}}\left(\sum_{i_2,\cdots, i_m=1}^{n}b_{ii_2i_3\cdots i_m}\right)\ge b_{ij_2j_3\cdots j_m} \mbox{ for all } (j_2, j_3, \cdots, j_m)\ne (i, i, \cdots, i).$$

This definition is a natural extension of the definition of B
matrices \cite{Pe, Pe1, SQ1}.   It is easily checkable if a given
tensor in $T_{m, n}$ is a B tensor, or a $B_0$ tensor or not. As
discussed in the introduction, the definitions of B and B$_0$
tensors are not so strict, compared with the definitions of
diagonally dominated and strictly diagonally dominated tensors, if
the tensor is not sparse. We also can see that a Z tensor is
diagonally dominated if and only if it is a B$_0$ tensor, and  a Z
tensor is strictly diagonally dominated if and only if it is a B
tensor \cite{SQ1}.

 A tensor $\C \in T_{m, r}$  is called {\bf a principal sub-tensor}  of a tensor $\A = (a_{i_1\cdots i_m}) \in T_{m, n}$ ($1 \le r\leq n$) if there is a set $J$ that composed of $r$ elements in $[n]$ such that
 $$\C = (a_{i_1\cdots i_m}),\mbox{ for all } i_1, i_2, \cdots, i_m\in J.$$ This concept was first introduced and used in \cite{Qi} for symmetric tensor. We denote by $\A^J_r$ the principal sub-tensor of a tensor $\A \in T_{m, n}$ such that the entries of
 $\A^J_r$ are indexed by $J \subset [n]$ with $|J|=r$ ($1 \le r\leq
 n$).

 It was proved in \cite{SQ1} that all the principal sub-tensors of a
 B$_0$ tensor are B$_0$ tensors, and all the principal sub-tensors of a
 B tensor are B tensors.

 \bigskip
Suppose that $\A \in S_{m, n}$ has a principal sub-tensor $\A^J_r$
with $J \subset [n]$ with $|J|=r$ ($1 \le r\leq
 n$) such that all the entries of $\A^J_r$ are one, and all the
 other entries of $\A$ are zero.   Then $\A$ is called a {\bf partially
 all one tensor}, and denoted by $\E^J$.  If $J = [n]$, then we denote $\E^J$ simply by $\E$ and call
 it an {\bf all one tensor}.   An even order partially all
 one tensor is positive semi-definite.  In fact, when $m$ is even, if we
 denote by $\x_J$ the $r$-dimensional sub-vector of a vector $\x \in \Re^n$,
 with the components of $\x_J$ indexed by $J$, then for any $\x \in
 \Re^n$, we have
 $$\E^J\x^m = \left( \sum \{ x_j : j \in J \} \right)^m \ge 0.$$

\section{Decomposition of B Tensors} \hspace{4mm}

We now prove the main result of this paper.

\begin{Theorem} \label{t4}
Suppose that $\B = (b_{i_1\cdots i_m}) \in S_{m, n}$ is a symmetric
B$_0$ tensor.   Then either $\B$ is a diagonally dominated symmetric
M tensor itself, or we have
\begin{equation} \label {e1}
\B = \M + \sum_{k=1}^s h_k \E^{J_k},
\end{equation}
where $\M$ is a diagonally dominated symmetric M tensor, $s$ is a
positive integer, $h_k
> 0$ and $J_k \subset [n]$,  for $k = 1, \cdots , s$,
and $J_k \cap J_l = \emptyset$, for $k \not = l, k$ and $l = 1,
\cdots , s$ when $s > 1$.   If furthermore $\B$ is a B tensor, then
either $\B$ is a strictly diagonally dominated symmetric M tensor
itself, or we have (\ref{e1}) with $\M$ as a strictly diagonally
dominated symmetric M tensor.
   An even order symmetric B$_0$ tensor is positive semi-definite. An even order
symmetric B tensor is positive definite.
\end{Theorem}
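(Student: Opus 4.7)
My plan is to peel off partially all one summands iteratively, reducing $\B$ until only a Z tensor remains, and then to combine with Theorem~\ref{t3} for the positive-definiteness conclusion.

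First, I would dispose of the case where $\B$ has no positive off-diagonal entry: then $\B$ is a Z tensor, and the B$_0$ row-sum inequality gives each row sum of $\B$ non-negative, so $\B$ is diagonally dominated, and hence a symmetric M tensor by the fact recalled at the end of Section~2. No $\E^{J_k}$ terms are needed in this case.

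Otherwise, let $h>0$ be the maximum positive off-diagonal entry of $\B$, attained at an off-diagonal position whose set of distinct indices is $J\subset[n]$. By the symmetry of $\B$, $h$ is attained at every permutation of this position, so for each $i\in J$ the maximum off-diagonal in row $i$ is at least $h$; the B$_0$ inequality then forces the row sum of $\B$ in row $i$ to be at least $n^{m-1}h\ge |J|^{m-1}h$. Setting $\B':=\B-h\,\E^J$ produces a symmetric tensor whose row sums stay non-negative (decreased by $h|J|^{m-1}$ in rows $i\in J$, unchanged elsewhere), and whose set of positive off-diagonal entries is strictly smaller than that of $\B$ because the entire maximum orbit has been zeroed out. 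Iterating generates the pairs $(h_k,J_k)$ and terminates after finitely many steps in a symmetric Z tensor $\M$; tracking row sums shows $\M$ has non-negative row sums, so $\M$ is a diagonally dominated symmetric M tensor. This gives the decomposition~\eqref{e1}. In the B-tensor case, the strict inequalities propagate so $\M$ is strictly diagonally dominated.

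Finally, when $m$ is even, an even-order partially all one tensor is positive semi-definite via the identity $\E^{J_k}\x^m=\bigl(\sum_{j\in J_k}x_j\bigr)^m\ge 0$ noted in Section~3, and by Theorem~\ref{t3} the diagonally dominated (resp.\ strictly diagonally dominated) symmetric M tensor $\M$ is positive semi-definite (resp.\ positive definite). Adding positive multiples of positive semi-definite tensors preserves positive semi-definiteness and positive definiteness, so the conclusion follows at once.

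The main obstacle I anticipate is the row-sum bookkeeping across multiple peelings: for a row $i$ lying in several $J_k$, one must ensure that the cumulative subtraction $\sum_{k:\,i\in J_k}h_k|J_k|^{m-1}$ stays within the original row sum of $\B$ at row $i$. At a single step the bound $|J|^{m-1}\le n^{m-1}$ combined with the original B$_0$ inequality is enough, but propagating this through the induction requires either showing that the intermediate tensor remains B$_0$ after each peeling, or bounding the aggregate subtraction directly using the symmetry-driven orbit structure. This quantitative accounting is the combinatorial heart of the proof.
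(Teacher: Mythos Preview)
Your overall plan---peel off partially all one summands until a diagonally dominated Z tensor remains, then invoke Theorem~\ref{t3}---is the right architecture, and it matches the paper. But your specific choice of $(h,J)$ is wrong, and the failure is exactly the ``row-sum bookkeeping'' you flag as the main obstacle. Taking $J$ to be the set of distinct indices of a \emph{single} globally maximal off-diagonal entry and $h$ to be that maximum does not keep the process under control after the first step.

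Concretely, for $m=2$, $n=3$, let
\[
\B=\begin{pmatrix}1.02&1&0.99\\ 1&3&0\\ 0.99&0&3\end{pmatrix},
\]
which is a symmetric B matrix. Your first peel has $h=1$, $J=\{1,2\}$, giving
\[
\B'=\begin{pmatrix}0.02&0&0.99\\ 0&2&0\\ 0.99&0&3\end{pmatrix}.
\]
Row sums are indeed still nonnegative, but $\B'$ is no longer B$_0$ (row~1 has average $1.01/3<0.99$). At the second peel you take $h'=0.99$, $J'=\{1,3\}$, and obtain
\[
\M=\B''=\begin{pmatrix}-0.97&0&0\\ 0&2&0\\ 0&0&2.01\end{pmatrix},
\]
which has a negative diagonal entry, is not diagonally dominated, is not an M matrix, and is not positive semi-definite. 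So your decomposition does not produce a valid $\M$, and the positive-definiteness argument collapses. The single-step estimate ``row sum $\ge n^{m-1}h\ge|J|^{m-1}h$'' that you use relies on the B$_0$ inequality, which you lose after the first subtraction; the aggregate bound you hope for is simply false for this peeling rule.

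The paper fixes this by a different choice: at each stage it takes $J_k=\hat J(\B_k)$, the set of \emph{all} rows currently containing a positive off-diagonal entry, and $h_k=\min_{i\in J_k} d_i$, the \emph{minimum} over those rows of the row-wise maximum off-diagonal. The point is that, by symmetry, any off-diagonal entry of $\B_k$ with an index outside $J_k$ lies in a row with no positive off-diagonals and hence is $\le 0$; this is exactly what makes the B$_0$ inequality survive the subtraction of $h_k\E^{J_k}$ in every row, so the induction goes through. Your scheme lacks this structural guarantee because indices outside your $J$ can still carry positive off-diagonals.
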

\begin{proof}    We now prove the first conclusion.  Suppose that $\B = (b_{i_1\cdots i_m}) \in S_{m, n}$ is a symmetric B$_0$
tensor.    Define $\hat J(\B) \subset [n]$ as
$$\hat J(\B) = \{ i \in [n]: {\rm there\ is\ at\ least\ one\ positive\ }
{\rm off-diagonal}\ {\rm entry\ in\ the\ }i{\rm th\ row\ of\ }\B
\}.$$ If $\hat J(\B)$ is an empty set, then $\B$ is a Z tensor, thus
a diagonally dominated symmetric M tensor.   The conclusion holds in
this case.   Assume that $\hat J(\B)$ is not empty.   Let $\B_1 =
\B$.  For each $i \in \hat J(\B)$, let $d_i$ be the value of the
largest off-diagonal entry in the $i$th row of $\B_1$.   Let
$$J_1 = \hat J(\B_1).$$
We see that $J_1 \not = \emptyset$.   Let
$$h_1 = \min \{ d_i : i \in J_1 \}.$$
Then $h_1 > 0$.

Now consider $\B_2 = \B_1 - h_1\E^{J_1}$. It is not difficult to see
that $\B_2$ is still a symmetric B$_0$ tensor.

We now replace $\B_1$ by $\B_2$, and repeat this process. We see
that
$$\hat J(\B_2) = \{ i \in [n]: {\rm there\ is\ at\ least\ one\ positive\ }
{\rm off-diagonal}\ {\rm entry\ in\ the\ }i{\rm th\ row\ of\ }\B_2
\}$$ is a proper subset of $\hat J(\B_1)$. Repeat this process until
$\hat J(B_{s+1}) = \emptyset$.  Let $\M = B_{s+1}$.   We see that
(\ref{e1}) holds.    Then we have
$$\hat J(\B_{k+1}) = \hat J(\B_k) \setminus J_k,$$   for $k \in [s]$.  Thus,
$J_k \cap J_l = \emptyset$, for $k \not = l, k$ and $l = 1, \cdots ,
s$ when $s > 1$.   This proves the first conclusion.

Similarly, we may prove the second conclusion, i.e., if $\B$ is a B
tensor, then either $\B$ itself is a strictly diagonally dominated
symmetric M tensor, or in (\ref{e1}), $\M$ is a strictly diagonally
dominated symmetric M tensor.

Suppose now $\B$ is a symmetric B$_0$ tensor and $m$ is even.  If
$\B$ itself is a diagonally dominated symmetric M tensor, then it is
positive semi-definite by Theorem \ref{t3}.   Otherwise, (\ref{e1})
holds with $s
> 0$. Let $\x \in \Re^n$. Then by (\ref{e1}),
$$\B \x^m = \M \x^m + \sum_{k=1}^s h_k \E^{J_k} \x^m = \M \x^m + \sum_{k=1}^s h_k \|\x_{J_k}\|_m^m \ge \M \x^m \ge 0,$$
as by Theorem \ref{t3}, a diagonally dominated symmetric M tensor is
positive semi-definite.  This proves the third conclusion.

The fourth conclusion can be proved similarly.
 \ep \end{proof}

For nonsymmetric B and B$_0$ tensors, some decomposition results may
still be obtained.    However, in this case, we cannot establish
positive definiteness or semi-definiteness results as Theorems
\ref{t1} and \ref{t3} cannot be applied to nonsymmetric tensors.

By this theorem and Theorem \ref{t1}, we have the following
 corollary.

 \begin{Corollary} \label{c1}
 All the H-eigenvalues of an even order symmetric B$_0$ tensor are
 nonnegative.   All the H-eigenvalues of an even order symmetric B tensor are
 positive.
 \end{Corollary}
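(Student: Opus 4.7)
The plan is to derive Corollary \ref{c1} as an immediate consequence of two results already at hand: the positive (semi-)definiteness conclusions established in Theorem \ref{t4} and the spectral characterization in Theorem \ref{t1}. There is no new combinatorial or decomposition work needed at this stage; the entire job is to correctly chain together the two biconditionals.

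For the first assertion, I would start with an arbitrary symmetric B$_0$ tensor $\B \in S_{m,n}$ with $m$ even. By the third conclusion of Theorem \ref{t4}, such a $\B$ is positive semi-definite, i.e. $\B \x^m \ge 0$ for every $\x \in \Re^n$. Now Theorem \ref{t1} guarantees that every even order symmetric tensor has at least one H-eigenvalue, and moreover asserts the equivalence between positive semi-definiteness and nonnegativity of all H-eigenvalues. Combining these gives that every H-eigenvalue of $\B$ is nonnegative, which is the first statement of the corollary.

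For the second assertion, I would repeat the argument one step stronger: if $\B$ is a symmetric B tensor of even order, then the fourth conclusion of Theorem \ref{t4} gives positive definiteness, and the second equivalence in Theorem \ref{t1} then upgrades this to positivity of every H-eigenvalue. Since H-eigenvalues exist (same reason as above), the claim is non-vacuous.

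There is essentially no obstacle, since both sides of each implication have been set up in the preceding sections. The only point worth being careful about is that Theorem \ref{t1} requires the hypotheses $\B \in S_{m,n}$ and $m$ even; both are inherited directly from the hypotheses of the corollary, so no extra verification is necessary. Thus the proof can be written in two sentences, one for each assertion, each invoking Theorem \ref{t4} and then Theorem \ref{t1}.
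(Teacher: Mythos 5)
Your proposal is correct and matches the paper exactly: the corollary is stated there with the remark ``By this theorem and Theorem \ref{t1}, we have the following corollary,'' which is precisely your chain of Theorem \ref{t4} (positive semi-definiteness, resp.\ definiteness) followed by Theorem \ref{t1} (equivalence with nonnegativity, resp.\ positivity, of all H-eigenvalues). Nothing is missing.
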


\section{Final Remarks and Further Questions}
\hspace{4mm}   Theorem \ref{t4} gives an easily checkable sufficient
condition for positive definite and semi-definite tensors.   It is
much more general compared with Theorem \ref{t3}.   The proof
technique of Theorem \ref{t4} is totally different that in the B
matrix literature \cite{Pe, Pe1}.   It decomposes a symmetric B
tensor as the sum of two kinds of somewhat basic tensors: strictly
diagonally dominated symmetric M tensors and positive multiples of
partially all one tensors.

\medskip

{\bf Question 1} Can we apply this technique to give more general
sufficient conditions for positive definite and semi-definite
tensors?

\medskip

In \cite{SQ1}, it was proved that an even order symmetric tensor is
positive definite if and only if it is a P tensor, and an even order
symmetric tensor is positive semi-definite if and only if it is a
P$_0$ tensor.   Thus, an even order symmetric B tensor is a P tensor
and an even order symmetric B$_0$ tensor is a P$_0$ tensor.

\medskip

{\bf Question 2}  Can we show that an even order non-symmetric B
tensor is a P tensor and an even order non-symmetric B$_0$ tensor is
a P$_0$ tensor?    After the early draft of this paper at arXiv,
Yuan and You \cite{YY} gave a counter example to answer this
question.

\medskip

In the literature, we know that several classes of tensors have the
following two properties:

a). If the order is even, then they are positive semi-definite;

b). If the order is odd, then their H-eigenvalues, if exist, are
nonnegative.

This includes diagonally dominated tensors discussed in Section 2 of
this paper, complete Hankel tensors and strong Hankel tensors
\cite{Qi14a}, completely positive tensors \cite{QXX} and P$_0$
tensors \cite{SQ1}.   Some of them guarantee that H-eigenvalues
exist even when the order is odd.

\medskip

{\bf Question 3}  Does an odd order symmetric B$_0$ tensor always
have H-eigenvalues?   If such H-eigenvalues exist, are they always
nonnegative?

\bigskip

{\bf Acknowledgment} We are thankful to Pingzhi Yuan, Lihua You,
Zhongming Chen and the referee for their comments.


\end{document}